\documentclass[11pt,a4paper]{article}
\title{\bf Semi-log-convexity of ${\rm M}/{\rm M}/\infty$ queues on $\mathbb{Z}_+$}
\author{Huige Chen\footnote{Email: {\color{blue}chenhuige@baicgroup.com.cn}.}
\quad Huaiqian Li\footnote{Email: {\color{blue}huaiqian.li@tju.edu.cn}.  Partially supported by the National Key R\&D Program of China (No. 2022YFA1006000).}
\vspace{2mm}
\\
{\footnotesize $^{*)}$Beijing Automotive Research Institute Co., Ltd, Beijing 100176, China}
\\
{\footnotesize $^{\dag)}$Center for Applied Mathematics and KL-AAGDM, Tianjin University, Tianjin 300072, China}
}

\date{}

\usepackage{amssymb,amsmath,amsfonts,amsthm,mathrsfs}
\usepackage{bbm}
\usepackage[marginal]{footmisc}
\usepackage{color}
\usepackage[]{hyperref}
\setlength{\topmargin}{0cm} \setlength{\oddsidemargin}{0cm}
\setlength{\evensidemargin}{0cm} \setlength{\textwidth}{16.5truecm}
\setlength{\textheight}{22.5truecm}

\setlength{\hoffset}{-0.4mm} \setlength{\voffset}{-0.4mm}
\setlength{\textwidth}{410pt} \setlength{\textheight}{640pt}
\setlength{\topmargin}{0pt} \setlength{\oddsidemargin}{14pt}
\setlength{\evensidemargin}{14pt} \setlength\arraycolsep{1pt}
\setlength{\headsep}{0mm} \setlength{\headheight}{0mm}

\def\R{\mathbb{R}}
\def\E{\mathbb{E}}

\def\Z{\mathbb{Z}}

\def\M{\mathcal{M}}
\def\d{\textup{d}}

\def\<{\langle}
\def\>{\rangle}
\def\Proof.{\noindent{\bf Proof. }}

\def\newdot{{\kern.8pt\cdot\kern.8pt}}

\newtheorem{theorem}{Theorem}[section]
\newtheorem{lemma}[theorem]{Lemma}

\theoremstyle{definition}\newtheorem{remark}[theorem]{Remark}

\begin{document}
\allowdisplaybreaks
\maketitle
\makeatletter 
\renewcommand\theequation{\thesection.\arabic{equation}}
\@addtoreset{equation}{section}
\makeatother 

\begin{abstract}
We solve the problem left in the recent paper by N. Gozlan et al [Potential Analysis 58, 2023, 123--158], establishing the semi-log-convexity of semigroups associated with ${\rm M}/{\rm M}/\infty$ queuing processes on the set of non-negative integers. Our approach is global in nature and yields the sharp constant.
\end{abstract}

{\bf MSC 2020:} primary  60J74, 47D07; secondary  60J27, 60J60

{\bf Keywords:} discrete Laplacian, ${\rm M}/{\rm M}/\infty$ queue, semi-log-convexity

\section{Introduction and main results}\hskip\parindent
Let $\mathbb{Z}_+$ denote the set of non-negative integers. In this note, we are concerned with ${\rm M}/{\rm M}/\infty$ queuing processes taking values in  $\mathbb{Z}_+$; for further background, see e.g. \cite{GLMRS,Chafai,Asmussen}.

Let $\mathbb{N}=\mathbb{Z}_+\setminus\{0\}$ and let $\lambda,\mu>0$. We consider the ${\rm M}/{\rm M}/\infty$ queuing processes $(X_t)_{t\geq0}$ on $\mathbb{Z}_+$ with input (or arrival) rate $\lambda$ and service rate $\mu$. In other words, $(X_t)_{t\geq0}$ is a continuous-time Markov chain with the infinitesimal generator $\M$ given by
\begin{equation*}\label{}
    \mathcal{M}f(n):=\lambda [f(n+1)-f(n)] + n \mu[f(n-1)-f(n)],\quad n \in \mathbb{Z}_+,
\end{equation*}
for any function $f:\Z_+\to\R$, where $f(-1)$ is identified as $f(0)$. Let $(A_t)_{t\geq0}$ be the semigroup generated by $\M$. For convenience, we define the traffic intensity $\rho=\frac{\lambda}{\mu}$ and the function  $p=p_t=e^{-\mu t}$ for every $t\geq0$. It is well known that the stationary distribution of $(X_t)_{t\geq0}$ is the Poisson law with parameter $\rho$, denoted by $\pi_\rho$, where
\begin{equation}
\nonumber
    \pi_\rho (k) = \frac{\rho^k}{k!}e^{-\rho },\quad k \in \Z_+.
\end{equation}

 Recall that the discrete Laplacian $\Delta_{\rm d}$, acting on function $f:\Z_+\to\R$, is defined as
\begin{equation*}
    \Delta_{\rm d} f(n)= f(n+1)+f(n-1)-2 f(n), \quad n\in \mathbb{N}.
\end{equation*}
In a recent work \cite[Proposition 3.1]{GLMRS}, N. Gozlan et al proved a semi-log-convexity property for the semigroup $A_t$. Specifically, they proved that for every $t>0$ and every non-zero function $f: \Z_+\to[0,\infty)$, the following inequality holds:
\begin{equation}\label{GLMRS-1}
    \Delta_{\rm d} [\log A_{t} f](n) \ge \log \left[\frac{1}{12}\left(1-\frac{p^{2}}{\left[p+\rho (1-p)^{2}\right]^{2}}\right)\right],\quad n\in\mathbb{N}.
\end{equation}

It is easy to see that \eqref{GLMRS-1} is not sharp. Indeed, as $t\rightarrow\infty$, the right-hand side of \eqref{GLMRS-1} tends to $-\log( 12)$, while the left-hand side may vanish. This follows from the fact that for any $n\in\Z_+$ and any bounded $f: \Z_+\to[0,\infty)$, $A_tf(n)\stackrel{t\rightarrow\infty}{\to }\int_{\Z_+}f\,\d\pi_{\rho}$.  Due to this discrepancy, the problem of improving \eqref{GLMRS-1} remains open, as noted in \cite[Remark 3.2]{GLMRS}. Recall that a function $h:\Z_+\rightarrow[0,\infty)$ is called log-convex if for any $n\in\mathbb{N}$, $\Delta_{\rm d}h(n)\geq0$, or equivalently, $h(n)^2\leq h(n+1)h(n-1)$. Inequality \eqref{GLMRS-1} provides a lower bound for $\Delta_{\rm d} (\log A_{t} f)$, which is weaker than full log-convexity but retains a similar flavor\textemdash{}hence the term semi-log-convexity for $A_t$.

The inequality \eqref{GLMRS-1}, including inequalities of this type in a broad sense, are of significant interest for several reasons. On the one hand, they play a crucial role in the approach developed in \cite{EldanLee} to study Talagrand's convolution conjecture (also known as the $L^1$-regularization effect) for the Ornstein--Uhlenbeck semigroup on $\R^d$; see \cite{Lehec} for the complete resolution of this case. The original conjecture, formulated by Talagrand \cite{Tala} for the Hamming (or Boolean) cube, remains open, while recent work in \cite{GLMRS} has extended these investigations to other models including the ${\rm M}/{\rm M}/\infty$ queue. On the other hand, inequality \eqref{GLMRS-1}  exhibits deep connections to non-local Li--Yau type inequalities investigated recently. For instance, in \cite{WZ}, such inequalities are established for the fractional Laplacian on $\R^d$, with a related discrete model discussed in Section 4 of the same paper,  and in \cite{LiQian}, analogous results are derived for a class of non-local Schr\"{o}dinger operators on $\R^d$, namely, Dunkl harmonic oscillators.

Our main contribution, contained in the following theorem, improves \eqref{GLMRS-1} by eliminating the extra constant $\frac{1}{12}$.
\begin{theorem}\label{main}
For every non-zero function $f: \Z_+\to[0,\infty)$ and every $t>0$,
        \begin{equation}\label{main-1}
    \Delta_{\rm d} [\log A_{t} f](n) \ge \log \left(1-\frac{p^{2}}{\left[p+\rho (1-p)^{2}\right]^{2}}\right),\quad n\in\mathbb{N}.
\end{equation}
 \end{theorem}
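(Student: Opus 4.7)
The natural plan is to reduce the semi-log-convexity inequality to a pointwise bound on the transition kernel, and then to verify that pointwise bound by exploiting the explicit structure of the M/M/$\infty$ process.

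By the classical decomposition of the M/M/$\infty$ process, conditional on $X_0=n$ the state $X_t$ has the law $\mathrm{Bin}(n,p)+\mathrm{Poi}(\rho(1-p))$, the two summands being independent. Setting $q_n(j):=(A_t\delta_j)(n)$ therefore gives the explicit kernel
\[
q_n(j)=e^{-\rho(1-p)}\sum_{k=0}^{\min(n,j)}\binom{n}{k}p^k(1-p)^{n-k}\frac{[\rho(1-p)]^{j-k}}{(j-k)!}.
\]
Since $A_tf(n)=\sum_j f(j)\,q_n(j)$, a Cauchy--Schwarz application (with weights $\sqrt{f(j)q_{n-1}(j)}$ and $\sqrt{f(j)}\,q_n(j)/\sqrt{q_{n-1}(j)}$) shows that \eqref{main-1} for every non-zero $f\geq 0$ is equivalent to the pointwise kernel inequality
\[
q_{n+1}(j)\,q_{n-1}(j)\geq c\,q_n(j)^2,\qquad n\in\mathbb{N},\;j\in\mathbb{Z}_+,
\]
with $c=1-p^2/[p+\rho(1-p)^2]^2$. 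A direct computation at $n=j=1$ yields $q_0(1)q_2(1)/q_1(1)^2=c$ exactly, identifying the boundary case that pins down the constant.

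For the pointwise bound I would exploit three structural ingredients: the one-step recursion $q_{n+1}(j)=p\,q_n(j-1)+(1-p)\,q_n(j)$ coming from $B_{n+1}=B_n+\mathrm{Bernoulli}(p)$; the intertwining $DA_t=pA_tD$ for the forward difference $Df(n):=f(n+1)-f(n)$, which follows from a short commutator computation $[\mathcal{M},D]=-\mu D$; and the generating-function identity $G_n(z):=(1-p+pz)^n e^{\rho(1-p)(z-1)}$ satisfying $G_n(z)^2\equiv G_{n+1}(z)G_{n-1}(z)$, which is equivalent to the convolution identity $q_n\ast q_n=q_{n+1}\ast q_{n-1}$. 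Combining these with the log-concavity of the Poisson law and the ultra-log-concavity of the binomial distribution should open access to the ratio $q_{n+1}(j)q_{n-1}(j)/q_n(j)^2$.

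The main obstacle, matching the ``global in nature'' language of the abstract, is to avoid any induction on $n$ or $j$ (which in \cite{GLMRS} produces the suboptimal prefactor $\tfrac{1}{12}$) and instead derive the sharp bound by a single uniform argument. A promising route is a time-derivative computation: differentiating $\Delta_{\rm d}\log A_tf$ in $t$ through the generator $\mathcal{M}$ and rewriting via the carr\'e du champ $\Gamma(g,h):=\tfrac12(\mathcal{M}(gh)-g\mathcal{M}h-h\mathcal{M}g)$ should give a differential inequality whose integration from $0$ to $t$ yields the claimed lower bound via the identification $p=e^{-\mu t}$; matching at the sharp corner $n=j=1$ then dictates the precise form of $c$. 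Verifying that the resulting time-derivative expression has the correct sign at all $t>0$, with the sharp constant surviving the integration, will likely require careful use of the intertwining $DA_t=pA_tD$ together with positivity of the semigroup, and is in my view the technical crux of the proof.
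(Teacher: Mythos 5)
Your reduction is sound and matches the skeleton of the paper's argument: by Cauchy--Schwarz, the theorem is indeed equivalent to the pointwise kernel bound $q_{n+1}(j)q_{n-1}(j)\ge c\,q_n(j)^2$ with $c=1-p^2/[p+\rho(1-p)^2]^2$, and your check that equality holds at $n=j=1$ correctly identifies the sharp constant. (The paper states the equivalent inequality for $G_k(n)=\mathbb{P}(X_t=n\mid X_0=k)$ as a semi-ultra-log-convexity in the \emph{terminal} state $n$ for fixed initial state $k$; your version, log-convexity in the \emph{initial} state, is the same statement after one application of reversibility, which is where the factor $\frac{n+1}{n}$ from the Poisson weights enters.)

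The gap is that you never prove the kernel inequality, which is the entire content of the paper's key lemma (Lemma \ref{keylemma}). Everything after your reduction is a list of candidate ingredients (``should open access to the ratio'', ``a promising route'', ``will likely require'') with no argument carried out, and the route you favour --- differentiating $\Delta_{\rm d}\log A_tf$ in $t$ and integrating a differential inequality --- is not shown to close; in particular you give no reason why the relevant time-derivative has a sign. Moreover, your stated reason for avoiding induction is mistaken: the paper's proof \emph{is} an induction on the initial state $k$, and it loses nothing in the constant. The mechanism is the factorization $\mathcal{B}(k,p)=\mathcal{B}(k-1,p)\ast\mathcal{B}(1,p)$ together with the observation (via Cauchy--Schwarz on the convolution sum) that convolving a sequence satisfying $h(n)^2\le K\,h(n+1)h(n-1)$ with any probability distribution preserves that inequality with the \emph{same} $K$; the constant is therefore fixed entirely by the explicit base cases $k=0,1$ and a separate algebraic check at $n=1$, and is not degraded at each inductive step (the prefactor $\tfrac{1}{12}$ in \cite{GLMRS} has a different origin). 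Without either this induction or a completed version of your analytic alternative, the proposal does not constitute a proof.
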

\begin{remark}
Theorem \ref{main} is sharp in the following sense: for every bounded function $f: \Z_+\to[0,\infty)$, both sides of \eqref{main-1} go to $0$ as $t\rightarrow\infty$.
\end{remark}

In the remaining part of this note, we aim to prove our main results.

\section{Proofs of Theorem \ref{main}}\hskip\parindent
We begin with some preliminary definitions.  Let $\mathcal{B}(k,a)$ be the binomial law of parameters $k\in\Z_+$ and $a\in[0,1]$, adopting the conventions that $\mathcal{B}(k,0)=\delta_0$ and $\mathcal{B}(k,1)=\delta_k$, where $\delta_n$ stands for the Dirac measure at $n$. Recall that
 $$\mathcal{B}(k,a)=\sum_{j=0}^k\frac{k!}{j!(k-j)!}a^j(1-a)^{k-j}\delta_j.$$

It is well known that, by the discrete analog of the Mehler formula (see equation (3) on page 321 of \cite{Chafai}), the law of the ${\rm M}/{\rm M}/\infty$ queue $X_t$ introduced above can be represented as the convolution of the  binomial law and the Poisson law. More precisely, for every $t\geq0$ and every $k\in \Z_+$,
\begin{equation}\label{mehler-1}
    {\rm Law}(X_{t} | X_{0}=k)=\mathcal{B}(k, p_t) \ast \pi_{\rho q_t},
\end{equation}
where $\ast$ denotes the discrete convolution, $q_t=1-p_t$ and  recall that $\rho=\frac{\lambda}{\mu}$ and $p=p_t=e^{-\mu t}$ for every $t\geq0$. This decomposition corresponds to the sum $X_t=Y_t+Z_t$, where $(Y_t)_{t\geq0}$ and $(Z_t)_{t\geq0}$ are independent processes such that ${\rm Law}(Y_t)=\mathcal{B}(k,p_t)$ and ${\rm Law}(Z_t)=\pi_{\rho q_t}$. Consequently, the semigroup action admits the representation:
\begin{eqnarray}\label{mehler-2}
A_t f(k):=\E[f(X_t)|X_0=k]=\E[f(Y_t+Z_t)],\quad k\in\Z_+,
\end{eqnarray}
for every function $f:\Z_+\rightarrow[0,\infty)$. Moreover, the process $(X_t)_{t\geq0}$ is reversible with respect to the Poisson law $\pi_\rho$:
\begin{equation}\label{rever}
    \mathbb{P}(X_{t}=i \mid X_{0}=j)\pi_{\rho}(j)= \mathbb{P}(X_{t}=j \mid X_{0}=i)\pi_{\rho}(i),\quad i, j \in \Z_+.
\end{equation}

To prove our main theorem, the next lemma is crucial.
\begin{lemma}\label{keylemma} Let $t>0$. Denote
\begin{equation}\label{notation-G}
G_{k}(n)=\mathbb{P}\left(X_{t}=n \mid X_{0}=k\right) ,\quad  k, n \in \mathbb{Z}_+,
\end{equation}
    and set
   \begin{equation}
   \nonumber
    K^{-1}=\frac{n}{n+1}\left(1-\frac{p^{2}}{[\rho(1-p)^{2}+p]^{2}}\right),\quad n \in \mathbb{N}.
\end{equation}
Then, for every $ k\in \mathbb{Z}_+$ and every $n \in \mathbb{N}$,
\begin{equation}\label{keylemma-1}
    G_{k}(n)^{2} \le K G_{k}(n+1) G_{k}(n-1).
\end{equation}
\end{lemma}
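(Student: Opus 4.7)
My plan is to exploit the Mehler convolution \eqref{mehler-1} together with the explicit three-term recursion in $n$ satisfied by $G_k(n)$. Writing $\alpha:=\rho(1-p)=\rho q$ and introducing the probability generating function $f_k(s):=\sum_{n\ge 0} G_k(n)\,s^n = (q+ps)^k e^{\alpha(s-1)}$, the identity $(q+ps)\,f_k'(s) = [kp + \alpha(q+ps)]\,f_k(s)$ obtained by logarithmic differentiation yields, on comparing coefficients of $s^n$,
\begin{equation*}
q(n+1)\,G_k(n+1) = [(k-n)p + \alpha q]\,G_k(n) + \alpha p\,G_k(n-1),\qquad n\in\mathbb{N}.
\end{equation*}
Setting $r_n:=G_k(n+1)/G_k(n)>0$, this becomes the scalar Riccati relation $q(n+1)\,r_n = (k-n)p + \alpha q + \alpha p/r_{n-1}$, and \eqref{keylemma-1} is equivalent to $r_{n-1}/r_n \le K$, i.e.\ to a quadratic inequality in $r_{n-1}$ whose discriminant is governed by $\beta = \alpha q + p = \rho(1-p)^2 + p$.

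I would then proceed by induction on $k$. The base case $k=0$ reduces to the classical Poisson identity $\pi_\alpha(n)^2 = \frac{n+1}{n}\pi_\alpha(n+1)\pi_\alpha(n-1)$, which sits comfortably below $K\,\pi_\alpha(n+1)\pi_\alpha(n-1)$ since $K > (n+1)/n$. For the inductive step, I would use the first-order recursion $G_k(n) = qG_{k-1}(n) + pG_{k-1}(n-1)$ together with the elementary Cauchy--Schwarz inequality $(a+b)^2 \le (c+d)(a^2/c + b^2/d)$, choosing $a=qG_{k-1}(n),\, b=pG_{k-1}(n-1)$ and $c=qG_{k-1}(n+1),\, d=pG_{k-1}(n)$, so that $c+d=G_k(n+1)$ automatically. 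The remaining factor $a^2/c+b^2/d$ then needs to be bounded by $K\,G_k(n-1)=K[qG_{k-1}(n-1)+pG_{k-1}(n-2)]$ via the inductive hypothesis applied at positions $n$ and $n-1$. As a sanity check for the sharpness of the constant, direct substitution at $k=1,\,n=1$ into $G_1(0)=qe^{-\alpha}$, $G_1(1)=e^{-\alpha}(\alpha q+p)$, $G_1(2)=e^{-\alpha}(\alpha^2 q/2+\alpha p)$ yields $G_1(1)^2 = K\,G_1(0)G_1(2)$ precisely when $K=2\beta^2/(\beta^2-p^2)$, which pins down the exact form of $\beta$.

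The main obstacle I anticipate is the $n$-dependence of $K$: since $K_n = \frac{n+1}{n(1-p^2/\beta^2)}$ is strictly decreasing in $n$, a direct Cauchy--Schwarz step picks up the larger constant $K_{n-1}$ at the lower position and loses sharpness. A natural remedy, and one suggested by the paper's description of a \emph{global} approach, is to invoke reversibility \eqref{rever} to convert \eqref{keylemma-1} into the equivalent, $n$-uniform bound $G_n(k)^2 \le \frac{\beta^2}{\beta^2-p^2}\,G_{n+1}(k)\,G_{n-1}(k)$, and to factor $G_n(k)=\pi_\alpha(k)\,q^n P_{k,n}$, where $P_{k,n}:=\sum_{j\ge 0}\binom{k}{j}(n)_j\,\theta^j$ with $\theta:=p/(\alpha q)$ and $(n)_j:=n(n-1)\cdots(n-j+1)$. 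The problem then reduces to the purely polynomial inequality $(\beta^2-p^2)\,P_{k,n}^2 \le \beta^2\,P_{k,n+1}P_{k,n-1}$, which admits a recursive analysis via the identities $P_{k,n+1}=P_{k,n}+k\theta\,P_{k-1,n}$ and $P_{k,n-1}=P_{k,n}-k\theta\,P_{k-1,n-1}$ (both consequences of the generating function $\sum_{n\ge 0}P_{k,n}s^n/n!=e^s(1+\theta s)^k$). This algebraic reformulation, free of Poisson weights, is where I expect the sharp constant to emerge transparently, likely via a carefully tuned Cauchy--Schwarz or interpolation argument on the $P_{k,n}$.
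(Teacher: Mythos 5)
Your setup is sound and in fact closely parallels the paper's own argument: the Mehler representation, induction on $k$, the Bernoulli--increment identity $G_k(n)=qG_{k-1}(n)+pG_{k-1}(n-1)$, and a Cauchy--Schwarz step on the two-term convolution are precisely the ingredients of the paper's proof (its part (ii) is your $(a+b)^2\le(c+d)(a^2/c+b^2/d)$ written as a sum over the Bernoulli increment). Your base case $k=0$ and the sharpness check at $k=1$, $n=1$ (equality when $K=2\beta^2/(\beta^2-p^2)$) are both correct, as are the three-term recursion and the identities for $P_{k,n}$. The problem is that the proposal stops before the inductive step is actually established in any form. You correctly observe that the naive induction stalls because $K=K_n$ decreases in $n$, so the hypothesis at position $n-1$ only supplies the larger constant $K_{n-1}$; your proposed remedy is to pass, via reversibility, to the equivalent $n$-uniform statement $(\beta^2-p^2)P_{k,n}^2\le\beta^2 P_{k,n+1}P_{k,n-1}$. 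But that reformulation is an \emph{identical} restatement of the lemma (the Poisson weights and the powers of $q$ cancel exactly), and you offer no argument for it beyond the hope that it will follow from ``a carefully tuned Cauchy--Schwarz or interpolation argument.'' That inequality is the entire content of the lemma, so the proof has a hole exactly where the work is.

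Two further points. First, the boundary case $n=1$ of the inductive step is not addressed: there your $d$-term requires controlling $G_{k-1}(0)^2/G_{k-1}(1)$, which the inductive hypothesis (stated only for $n\ge1$) does not cover; the paper devotes a separate explicit computation to this case (its part (i), which reduces to $(1-m)(p^2u^2-mp^4)\ge0$ with $u=\rho(1-p)^2$ and closes the induction by feeding the $m-1$ inequality back in algebraic form). Second, your diagnosis of the $n$-dependence of $K$ is a legitimate and nontrivial observation --- the paper's part (ii) applies $\sqrt{K}$ uniformly over the positions $j\in\{n-1,n\}$ appearing in the convolution sum, and any complete write-up must justify the use of the constant at position $j=n-1$. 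Identifying this difficulty is valuable, but flagging it and sketching an equivalent reformulation is not the same as resolving it; as it stands the inductive step, and hence the lemma, remains unproved in your proposal.
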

\begin{remark}\label{remark-keylemma}
(1) Recall that a function $h:\Z_+\rightarrow[0,\infty)$ is called ultra-log-convex if the mapping $\Z_+\ni n\mapsto h(n)n!$ is log-convex, or equivalently, $h(n)^2\leq \frac{n+1}{n}h(n+1)h(n-1)$ for all $n\in\mathbb{N}$. According to this, since $K\in(\frac{n+1}{n},\infty)$ clearly, we may interpret inequality \eqref{keylemma-1} as the semi-ultra-log-convexity of the function $\mathbb{N}\ni n\mapsto G_k(n)$ for each fixed $k\in\Z_+$. For further background on the log-convexity and the ultra-log-convexity, including their various properties, applications and relationships, we recommend the interested reader to the comprehensive review \cite{SW2014}.

(2) Lemma \ref{keylemma} can be generalized as follows. Let $\widetilde{Y}$ and $\widetilde{Z}$ be independent random variables such that ${\rm Law}(\widetilde{Y})=\mathcal{B}(k,a)$ and ${\rm Law}(\widetilde{Z})=b$, where $a,b\in[0,1]$ and $k\in\mathbb{Z}_+$. Define
$$H_k(n)=[\mathcal{B}(k,a)\ast\pi_b](n),\quad k,n\in\mathbb{Z}_+,$$
and set
$$M^{-1}=\frac{n}{n+1}\left(1-\frac{a^{2}}{[(1-a)b+a]^{2}}\right),\quad n \in \mathbb{N}.$$
Then
\begin{equation}\label{remark-keylemma-1}
    H_{k}(n)^{2} \le M H_{k}(n+1) H_{k}(n-1),\quad k\in\mathbb{Z}_+,\,n\in\mathbb{N}.
\end{equation}
The proof of \eqref{remark-keylemma-1} follows the same method as that used for \eqref{keylemma-1}. Furthermore, for a fixed $t>0$, if we take ${\rm Law}(Y_t)={\rm Law}(\widetilde{Y})$, ${\rm Law}(Z_t)={\rm Law}(\widetilde{Z})$, $a=p$ and $b=\rho(1-p)$, then \eqref{remark-keylemma-1} reduces to \eqref{keylemma-1}.
\end{remark}

\begin{proof}[Proof of Lemma \ref{keylemma}] Fix $t>0$. Let $Y_t$ and $Z_t$ be independent such that ${\rm Law}(Y_t)=\mathcal{B}(k, p)$ and ${\rm Law}(Z_t)=\pi_{\rho(1-p)}$. Then, by \eqref{mehler-2}, we clearly have
\begin{eqnarray*}
G_{k}(n) 
&=&\mathbb{P}\left(Y_{t}+Z_{t}=n\right),\quad  k,n \in \mathbb{Z}_+.
\end{eqnarray*}

Now we prove \eqref{keylemma-1} by induction.

(1) Let $k=0$. Then, since $\mathbb{P}(Y_{t}=0) = 1$ and ${\rm Law}(Z_{t})=\pi_{\rho(1-p)}$, by the independence, we have
\begin{eqnarray*}
G_{0}(n)&=&\mathbb{P}(Y_{t}+Z_{t}=n)=\mathbb{P}(Y_{t}=0) \mathbb{P}(Z_{t}=n)\\
 &=& \mathbb{P}(Z_{t}=n) = \frac{[\rho(1-p)]^{n}}{n !}  e^{-\rho(1-p)},\quad n \in \mathbb{N}.
\end{eqnarray*}
Hence
\begin{eqnarray*}
&&\frac{G_{0}(n+1) G_{0}(n-1)}{G_{0}(n)^{2}} =\frac{n}{n+1} \\
&&\ge \frac{n}{n+1}\left(1-\frac{p^{2}}{[\rho(1-p)^{2}+p]^{2}}\right)
=K^{-1}, \quad n \in \mathbb{N}.
\end{eqnarray*}

(2) Let $k=1$. Then, since ${\rm Law}(Y_t)= \mathcal{B}(1, p)$ and ${\rm Law}( Z_{t} )= \pi_{\rho(1-p)}$, by the independence, we obtain
\begin{equation*}\begin{split}
\nonumber
G_{1}(n) & =\mathbb{P}(Y_{t}=1) \mathbb{P}(Z_{t}=n-1)+\mathbb{P}(Y_{t}=0) \mathbb{P}(Z_{t}=n) \\
& =p \pi_{\rho(1-p)}(n-1)+(1-p) \pi_{\rho(1-p)}(n) \\
& =\left((1-p)+p \frac{n}{\rho(1-p)}\right) \frac{[\rho(1-p)]^{n}}{n !}  e^{-\rho(1-p)},\quad n\in \mathbb{N}.
\end{split}\end{equation*}
Hence
\begin{equation*}
\begin{split}
\nonumber
    \frac{G_{1}(n+1) G_{1}(n-1)}{G_{1}(n)^{2}}&=\frac{n}{n+1}  \frac{\left((1-p)+p \frac{n+1}{\rho(1-p)}\right)\left((1-p)+p \frac{n-1}{\rho(1-p)}\right)}{\left((1-p)+p \frac{n}{\rho(1-p)}\right)^{2}}\\
    & =\frac{n}{n+1} \frac{\left((1-p)+p \frac{n}{\rho(1-p)}\right)^{2}-\left(\frac{p}{\rho(1-p)}\right)^{2}}{\left((1-p)+p \frac{n}{\rho(1-p)}\right)^{2}} \\
    & \ge \frac{n}{n+1}\left(1-\frac{p^{2}}{[\rho(1-p)^{2}+p]^{2}}\right)\\
    & =K^{-1}, \quad n \in \mathbb{N}.
    \end{split}
\end{equation*}

(3) Let $k=m\in\mathbb{N}\setminus\{1\}$. Suppose that for any $m>2$,
\begin{equation}\label{3-1}
    G_{m-1}(n)^{2} \leq K G_{m-1}(n+1) G_{m-1}(n-1) ,\quad n \in \mathbb{N}.
\end{equation}
 We need to prove that
\begin{equation}\label{3-2}
    G_{m}(n)^{2} \le K G_{m}(n+1) G_{m}(n-1), \quad n \in \mathbb{N}.
\end{equation}
Below, we divide the proof of \eqref{3-2} into two parts according to that $n=1$ and $n\in\mathbb{N}\setminus\{1\}$.

(i) Let $n=1$. It suffices to prove that
\begin{equation}\label{3(i)-1}
G_{m}(1)^2 \leq K G_{m}(2) G_{m}(0).
\end{equation}
For convenience, let $b = \rho(1-p)$. By the independence, we have
\begin{equation}
\begin{split}
\nonumber
G_{m}(0) &=\mathbb{P}\left(Y_{t}+Z_{t}=0\right) =\mathbb{P}\left(Y_t=0\right) \mathbb{P}\left(Z_{t}=0\right) \\
& =(1-p)^{m} e^{-b},\\
G_{m}(1) & =\mathbb{P}\left(Y_{t}+Z_ t=1\right) \\
& =\mathbb{P}\left(Y_{t}=1\right) \mathbb{P}\left(Z_{t}=0\right)+\mathbb{P}\left(Y_{t}=0\right) \mathbb{P}\left(Z_{t}=1\right) \\
& =m p(1-p)^{m-1}e^{-b}+(1-p)^{m} b e^{-b}, \\
G_{m}(2) & =\mathbb{P}\left(Y_{t}+Z_{t}=2\right) \\
& =\mathbb{P}\left(Y_{0}=0\right) \mathbb{P}\left(Z_{t}=2\right)+\mathbb{P}\left(Y_t=1\right) \mathbb{P}\left(Z_t=1\right)+\mathbb{P}\left(Y_t=2\right) \mathbb{P}\left(Z_{t}=0\right) \\
& =(1-p)^{m} \frac{b^{2}}{2} e^{-b}+m p(1-p)^{m-1} b e^{-b}+\frac{m(m-1)}{2} p^{2}(1-p)^{m-2} e^{-b}.
\end{split}
\end{equation}
Hence, to show that \eqref{3(i)-1} holds, it is equivalent to prove that
\begin{equation}\label{3(i)-2}
[m p+(1-p) b]^{2} \leq K \left[ \frac{(1-p)^{2} b^{2}}{2}+m p(1-p) b+\frac{m(m-1)}{2} p^{2}\right],
\end{equation}
where
$$
K^{-1}=\frac{1}{2}\left(1-\frac{p^{2}}{[b(1-p)+p ]^{2}}\right).
$$
Let $u = b(1-p)$. Then, \eqref{3(i)-2} can be rewritten as
$$
(m p+u)^{2}\left(u^{2}+2 p u\right) \leq (u+p)^{2}\left[u^{2}+2 m p u+m(m-1) p^{2}\right],
$$
which is clearly equivalent to
\begin{equation}\label{3(i)-3}
    (1-m)(p^2u^2-mp^4) \ge 0.
\end{equation}
By \eqref{3-1}, $G_{m-1}(1)^{2} \leq K G_{m-1}(2) G_{m-1}(0)$ for any $m\in\mathbb{N}$ such that $m>2$, which means that
$$
(2-m)[p^2u^2-(m-1)p^4] \ge 0.
$$
Hence
$$
p^2u^2 \le (m-1)p^4\le mp^4.
$$
Thus, \eqref{3(i)-3} holds for any $m\in\mathbb{N}\setminus\{1\}$, from which we conclude that \eqref{3(i)-1} holds.

(ii) Let $n \in \mathbb{N}\setminus\{1\}$. Let $Y_{t}=Y_{t}'+\varepsilon_{t}$, where ${\rm Law}(Y_{t}')= \mathcal{B}(m-1, p)$ and $ {\rm Law}(\varepsilon_{t})= \mathcal{B}(1, p)$ such that $Y_{t}'$ and $\varepsilon_{t}$ are independent and also independent of $Z_{t}$ with ${\rm Law}(Z_{t})= \pi_{\rho(1-p)}$. Then
\begin{equation}
\begin{split}
\nonumber
    G_{m}(n)&=\mathbb{P}(Y_{t}+Z_{t}=n) =\mathbb{P}(Y_{t}'+\varepsilon_{t}+Z_{t}=n) \\
    & =\sum_{j=0}^{n} \mathbb{P}(Y_{t}'+Z_{t}=j) \mathbb{P}(\varepsilon_{t}=n-j)\\
    & = \sum_{j=1}^{n} \mathbb{P}(Y_{t}{ }^{\prime}+Z_{t}=j) \mathbb{P}(\varepsilon_{t}=n-j),
\end{split}
\end{equation}
where the last equality is due to the fact that $\mathbb{P}(\varepsilon_{t} = n)=0$ since $n>1$. Thus, by the Cauchy--Schwarz inequality, we derive that
\begin{equation}
\begin{split}
\nonumber
    &G_{m}(n) = \sum_{j=1}^{n} \mathbb{P}(Y_{t}'+Z_{t}=j) \mathbb{P}(\varepsilon_{t}=n-j)\\
    &\le \sum_{j=1}^{n} \sqrt{K}\, \mathbb{P}(Y_{t}^{\prime}+Z_{t}=j+1)^{\frac{1}{2}} \mathbb{P}(Y_{t}^{\prime}+Z_{t}=j-1)^{\frac{1}{2}}\,
          \mathbb{P}(\varepsilon_{t}=n-j)\\
    &\leq \sqrt{K}\,\bigg(\sum_{j=1}^{n} \mathbb{P}(Y_{t}'+Z_{t}=j+1) \mathbb{P}(\varepsilon_{t}=n-j)\bigg)^{\frac{1}{2}}
          \bigg(\sum_{j=1}^{n} \mathbb{P}(Y_{t}'+Z_{t}=j-1) \mathbb{P}(\varepsilon_{t}=n-j)\bigg)^{\frac{1}{2}}\\
    & = \sqrt{K}\bigg(\sum_{j=2}^{n+1} \mathbb{P}(Y_{t}'+Z_{t}=j) \mathbb{P}(\varepsilon_{t}=n-j+1)\bigg)^{\frac{1}{2}}\bigg(\sum_{j=0}^{n-1}
          \mathbb{P}(Y_{t}'+Z_{t}=j) \mathbb{P}(\varepsilon_{t}=n-j-1)\bigg)^{\frac{1}{2}} \\
    &\le \sqrt{K}\bigg(\sum_{j=0}^{n+1} \mathbb{P}(Y_{t}'+Z_{t}=j) \mathbb{P}(\varepsilon_{t}=n-j+1)\bigg)^{\frac{1}{2}}\bigg(\sum_{j=0}^{n-1}
          \mathbb{P}(Y_{t}'+Z_{t}=j) \mathbb{P}(\varepsilon_{t}=n-j-1)\bigg)^{\frac{1}{2}} \\
    & =\sqrt{K}\,\mathbb{P}(Y_{t}'+Z_{t}+\varepsilon_{t}=n+1)^{\frac{1}{2}} \mathbb{P}(Y_{t}'+Z_{t}+\varepsilon_{t}=n-1)^{\frac{1}{2}} \\
    & =\sqrt{K}\, G_{m}(n+1)^{\frac{1}{2}} G_{m}(n-1)^{\frac{1}{2}}.
\end{split}
\end{equation}

Putting (i) and (ii) together, we prove \eqref{3-2}.

Therefore, combing (1), (2) and (3) together, we complete the proof of \eqref{keylemma-1}.
\end{proof}

Now we are ready to prove our main result.
\begin{proof}[Proof of Theorem \ref{main}]
Let $t>0$ and let $f: \mathbb{N}\to[0,\infty)$ be not identical to $0$. By the proof of \cite[Proposition 3.5]{GLMRS}, we have
\begin{equation*}
    \Delta_{\rm d} [\log A_{t} f](n)=\log \frac{A_{t} f(n+1) A_{t} f(n-1)}{A_{t} f(n)^{2}},\quad n \in \mathbb{N}.
\end{equation*}
Then, combining this with  \eqref{mehler-2} and \eqref{rever}, we derive that
\begin{eqnarray}\label{proof-main-4}
    &&\Delta_{\rm d} [\log A_{t} f](n)\cr
    &&=\log \frac{\big(\sum_{k=0}^{\infty} f(k) \mathbb{P}(X_{t}=k \mid X_{0}=n+1)\big)\big(\sum_{k=0}^{\infty} f(k) \mathbb{P}(X_{t}=k \mid X_{0}=n-1)\big)}{\big(\sum_{k=0}^{\infty} f(k) \mathbb{P}(X_{t}=k \mid X_{0}=n)\big)^{2}} \cr
    &&=\log \left[\frac{n+1}{n} \cdot \frac{\big(\sum_{k=0}^{\infty} f(k) \pi_{\rho}(k) G_{k}(n+1)\big)\big(\sum_{k=0}^{\infty} f(k) \pi_{\rho}(k) G_{k}(n-1)\big)}{\big(\sum_{k=0}^{\infty} f(k) \pi_{\rho}(k) G_{k}(n)\big)^{2}}\right]\cr
    &&\ge \log \left[\frac{n+1}{n} \cdot \frac{\left(\sum_{k=0}^{\infty} f(k) \pi_{\rho}(k) \sqrt{G_{k}(n+1)} \sqrt{G_{k}(n-1)}\right)^{2}}{\big(\sum_{k=0}^{\infty} f(k) \pi_{\rho}(k) G_{k}(n)\big)^{2}}\right],\quad n \in \mathbb{N},
\end{eqnarray}
where $G_k(n)$ is defined in \eqref{notation-G} and the Cauchy--Schwarz inequality is applied in the last inequality. Thus, by \eqref{keylemma-1} and \eqref{proof-main-4}, we immediately have
$$
\Delta_{\rm d} [\log A_{t} f](n) \ge \log \left(\frac{n+1}{n}  K^{-1}\right)=\log \left(1-\frac{p^{2}}{[\rho(1-p)^{2}+p]^{2}}\right),\quad n \in \mathbb{N}.
$$

The proof is completed.
\end{proof}

\subsection*{Acknowledgment}\hskip\parindent
The authors sincerely thank the anonymous referee for his/her insightful comments and valuable suggestions, which have significantly improved the manuscript.

\end{document}